\theoremstyle{plain}
\newtheorem{thm}{Thm}[section]
\newtheorem{theorem}[thm]{Theorem}
\newtheorem{lemma}{Lemma}[section]
\providecommand{\customgenericname}{}
\newcommand{\newcustomtheorem}[2]{%
	\newenvironment{#1}[1]
	{%
		\renewcommand\customgenericname{#2}%
		\renewcommand\theinnercustomgeneric{##1}%
		\innercustomgeneric
	}
	{\endinnercustomgeneric}
}
\newenvironment{proof*}{\noindent \emph{Proof.}}{\hfill$\Diamond$}
\renewcommand{\pod}[1]{\allowbreak\mathchoice
	{\if@display \mkern 0mu\else \mkern 0mu\fi (#1)}
	{\if@display \mkern 0mu\else \mkern 0mu\fi (#1)}
	{\mkern 1mu(\mathrm{mod}\mkern 4mu #1)}
	{\mkern 0mu(#1)}
}
\tikzstyle{vertex}=[circle, draw, fill=black!50,
\tikzset{->-/.style={decoration={
			markings,
			mark=at position .5 with {\arrow{>}}},postaction={decorate}}}
\tikzstyle{bigblue}=[color=blue, very thick, >=stealth]
\tikzstyle{lightblue}=[color=blue, thin, >=stealth]
\tikzstyle{bigred}=[color=red, very thick, >=stealth]
\tikzstyle{lightred}=[color=red, thin, >=stealth]
\tikzstyle{biggreen}=[color=black!30!green, very thick, >=stealth]
\tikzstyle{lightgreen}=[color=black!30!green,  thin, >=stealth]
\begin{document}
	\title{A new perspective from hypertournaments to tournaments}
 \date{}
\author{Jiangdong Ai\thanks{Corresponding author.  School of Mathematical Sciences and LPMC, Nankai University, Tianjin 300071, P.R.
China. Email: jd@nankai.edu.cn. },~ Qiming Dai\thanks{School of Mathematical Sciences and LPMC, Nankai University, Tianjin 300071, P.R.
China. Email: newbbbie@163.com.},~ Qiwen
Guo\thanks{Center for Combinatorics and LPMC, Nankai University, Tianjin 300071, P.R.
China. Email: gqwmath@163.com.},~ Yingqi Hu\thanks{Center for Combinatorics and LPMC, Nankai University, Tianjin 300071, P.R.
China. Email: nkuhyq@163.com.},~ Changxin Wang\thanks{Center for Combinatorics and LPMC, Nankai University, Tianjin 300071, P.R.
China. Email: Simonang@163.com.}}
\maketitle
\begin{center}
\begin{minipage}{140mm}
\small\noindent{\bf Abstract:}
   A $k$-tournament $H$ on $n$ vertices is a pair $(V, A)$ for $2\leq k\leq n$, where $V(H)$ is a set of vertices, and $A(H)$ is a set of all possible $k$-tuples of vertices, such that for any $k$-subset $S$ of $V$, $A(H)$ contains exactly one of the $k!$ possible permutations of $S$. In this paper, we investigate the relationship between a hyperdigraph and its corresponding normal digraph. Particularly, drawing on a result from Gutin and Yeo, we establish an intrinsic relationship between a strong $k$-tournament and a strong tournament, which enables us to provide an alternative (more straightforward and concise) proof for some previously known results and get some new results. 

\smallskip
\textbf{Key words:} Hyperdigraphs; Hypertournament; Tournament; Pancyclic;
\smallskip
\textbf{AMS Subject Classification (2020):} 05C20, 05C65
\end{minipage}
\end{center}
\section{Introduction} 

A $k$-hyperdigraph $H$ on $n$ vertices is a pair $(V, A)$ for $2\leq k\leq n$, where $V(H)$ is a set of vertices, and $A(H)$ is a set of $k$-tuples of vertices, called hyperarcs, such that for a $k$-subset $S$ of $V$, $A(H)$ contains exactly one of the $k!$ permutations of $S$. For a hyperarc $a=(x_1x_2\dots x_k)$, we say $x_i$ precedes $x_j$ if $i<j$, and write as $x_iax_j$. A 2-hyperdigraph is merely an oriented graph. 
A path $P$ of length $k$ in $H$ is an alternating sequence $P=x_1a_1x_2a_2\dots x_ka_kx_{k+1}$ of distinct vertices $x_i$ and distinct hyperarcs $a_j$ such that $x_ia_ix_{i+1}$ for every $1\leq i\leq k$. We call $P$ a cycle of length $k$ if $x_{k+1}=x_1$. A path (cycle) is Hamiltonian if it contains all vertices of $H$.
A $k$-hyperdigraph $H$ is strong if there is a path from $u$ to $v$ for each ordered pair $(u,v)$, where $u,v$ are distinct vertices in $H$.
A vertex (a hyperarc) of $H$ is pancyclic, if it is contained in an $l$-cycle for all $l\in \{3,\dots, n\}.$ A $k$-hyperdigraph $H$ is vertex (hyperarc)-pancyclic if all of its vertices (hyperarcs) are pancyclic. 

Tournaments are the widely studied class in digraphs. 
We call $H$ a $k$-tournament if $H$ is a $k$-hyperdigraph and each $k$-subset of $V(H)$ has exactly one permutation that belongs to $A(H)$. A tournament is a $2$-hypertournament.

In \cite{Gutin1997}, Gutin and Yeo proved Theorem \ref{Gutin}, an extension of Redei's theorem and Camion's theorem to hypertournaments, which are the most basic results on tournaments. They showed every tournament contains a Hamiltonian path, and every strong tournament has a Hamiltonian cycle. Moreover, they proposed the following interesting question: Is a strong $k$-tournament pancyclic or vertex-pancyclic? 
\begin{theorem}\label{Gutin}\cite{Gutin1997}
For $k\geq 3$, every $k$-tournament on $n\geq k+1$ vertices has a Hamiltonian path, and every strong $k$-tournament on $n\ge k+2$ vertices contains a Hamiltonian cycle.
\end{theorem}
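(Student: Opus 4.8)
\section*{Proof proposal}

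The plan is to treat the two assertions separately, proving the Hamiltonian path by induction on $n$ and the Hamiltonian cycle by a Camion-style longest-cycle argument; both rest on the same insertion principle, so I would isolate that first. Say that a hyperarc \emph{orders $x$ before $v$} if $A(H)$ contains a hyperarc in which $x$ precedes $v$, and abbreviate this situation by $x\to v$. The elementary but crucial counting input is that any fixed pair $\{x,v\}$ lies in exactly $\binom{n-2}{k-2}$ hyperarcs, a quantity that is $\ge 1$ whenever $n\ge k$, is $\ge 2$ as soon as $n\ge k+1$, and grows quickly with $n$; this slack is what lets me realize a prescribed adjacency by a hyperarc, and choose several such hyperarcs pairwise distinct and distinct from those already in use.

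For the path I induct on $n$, the step going from $n-1$ to $n$ for $n\ge k+2$. Deleting a vertex $v$ leaves a $k$-tournament on $n-1\ge k+1$ vertices, which by induction has a Hamiltonian path $P=x_1a_1x_2\cdots x_{n-1}$; since every $a_i$ avoids $v$, any hyperarc through $v$ is automatically unused, which is exactly why induction (rather than a direct longest-path argument) keeps the distinctness bookkeeping trivial. If some hyperarc orders $v$ before $x_1$ I prepend it, and if some hyperarc orders $x_{n-1}$ before $v$ I append it. Otherwise every hyperarc through $\{v,x_1\}$ orders $x_1$ before $v$ and every hyperarc through $\{v,x_{n-1}\}$ orders $v$ before $x_{n-1}$; letting $j$ be the least index with some hyperarc ordering $v$ before $x_j$ (so $2\le j\le n-1$), minimality of $j$ forces every hyperarc through $\{v,x_{j-1}\}$ to order $x_{j-1}$ before $v$. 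I then pick a hyperarc $b$ with $x_{j-1}bv$ and a hyperarc $c$ with $vcx_j$, splice $v$ between $x_{j-1}$ and $x_j$, and use $\binom{n-2}{k-2}\ge 2$ to guarantee $b\ne c$ (all other distinctness is free). The base case $n=k+1$ does not reduce further, since a $k$-tournament on $k$ vertices has only one hyperarc, so I would dispatch it by a self-contained extremal argument on a longest path; this is the one genuinely fiddly point, since there the per-pair count drops to its minimum $k-1$ and can clash with the number of hyperarcs already used.

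For the Hamiltonian cycle I would mimic Camion's proof. Take a longest cycle $C=x_1a_1x_2\cdots x_\ell a_\ell x_1$ and suppose $\ell<n$. The key insertion lemma is that if an outside vertex $v$ admits some hyperarc ordering a cycle vertex before $v$ and some hyperarc ordering $v$ before a cycle vertex, then there is a consecutive position $i$ with a hyperarc $x_ibv$ and a hyperarc $vcx_{i+1}$, whence (distinctness permitting) $v$ can be spliced in to lengthen $C$. The proof is the cyclic version of the monotonicity above: setting $f_i=1$ when some hyperarc orders $x_i$ before $v$ and $p_i=1$ when some hyperarc orders $v$ before $x_i$, one has $f_i\vee p_i=1$ for all $i$, and the absence of a good position forces $f_i=1\Rightarrow p_{i+1}=0\Rightarrow f_{i+1}=1$ around the whole cycle, collapsing either all $f_i$ or all $p_i$ and contradicting the hypothesis. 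Hence each non-insertable outside vertex either \emph{dominates} $C$ (every hyperarc orders it before each $x_i$) or is \emph{dominated} by $C$.

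To finish I would invoke strongness exactly as in the tournament case. First rule out a hyperarc ordering a dominated vertex $t$ before a dominating vertex $s$: combining $x_\ell\to t$, $t\to s$ and $s\to x_1$ would yield a cycle on $V(C)\cup\{s,t\}$ of length $\ell+2$, contradicting maximality. Thus the dominating vertices precede everything in $V(C)$ and every dominated vertex, so no hyperarc orders any vertex of $V(C)$ or any dominated vertex before a dominating one; but then a path from $x_1$ to a dominating vertex, guaranteed by strongness, would have to cross from this forbidden side, a contradiction, so no dominating vertex exists, and symmetrically no dominated vertex exists, i.e.\ $C$ is Hamiltonian. The main obstacle throughout is not the combinatorial skeleton, which parallels R\'edei and Camion, but the repeated requirement that the hyperarcs realizing the prepend, append, splice and the auxiliary $s,t$-cycle be pairwise distinct and avoid the $O(n)$ hyperarcs already in use. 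For large $n$ this is immediate from $\binom{n-2}{k-2}\gg n$, but near the thresholds $n=k+1$ (path) and $n=k+2$ (cycle) the counts are tight---for instance only $\binom{k}{2}$ hyperarcs meet a given pair when $n=k+2$---so these boundary cases would need a careful, possibly ad hoc, accounting of how many of the relevant hyperarcs can be forced to lie on $C$.
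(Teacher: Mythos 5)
The paper does not actually prove Theorem \ref{Gutin}: it is quoted from Gutin and Yeo \cite{Gutin1997} and used as a black box, so there is no internal proof to compare yours against. Judged on its own terms, your proposal reproduces the R\'edei/Camion skeleton correctly but leaves genuine gaps exactly where the hypertournament version becomes nontrivial, namely at the thresholds $n=k+1$ and $n=k+2$ that make the theorem best possible. For the path, the inductive step from $n-1$ to $n$ is sound (the new hyperarcs contain $v$ while those of the path in $H-v$ do not, so distinctness is indeed free), but the base case $n=k+1$ is only flagged as ``fiddly'' and never argued; there each pair lies in only $k-1$ hyperarcs while a Hamiltonian path needs $k$ pairwise distinct ones, and the promised ``self-contained extremal argument'' is not supplied.

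For the cycle the gap is more structural. Unlike in the path induction, the hyperarcs $a_1,\dots,a_\ell$ of a longest cycle $C$ in $H$ are $k$-sets that may well contain the outside vertex $v$, so a hyperarc realizing $x_ibv$ or $vcx_{i+1}$ is \emph{not} automatically unused. Consequently the step ``absence of a good position forces $f_i=1\Rightarrow p_{i+1}=0$'' is false as stated: insertion at position $i$ can also fail because every admissible $b$ or $c$ collides with some $a_j$ or with each other, and then the cyclic collapsing argument (and hence the dominating/dominated trichotomy) breaks down. The same objection applies to the three hyperarcs needed for the $(\ell+2)$-cycle through $s$ and $t$. Repairing this requires comparing $\binom{n-2}{k-2}$ with the up to $n$ hyperarcs already committed, and near $n=k+2$ the count genuinely fails (for $k=3$, $n=5$ a pair lies in only $3$ hyperarcs while the cycle may use $5$), so ad hoc case analysis is unavoidable --- this is exactly the kind of bookkeeping the present paper must do in Lemmas \ref{k=3}--\ref{general-matching} to prove its own Theorem \ref{main theorem hypertournament}. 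You correctly identify these obstacles, but identifying them is not overcoming them, so the proposal is an outline rather than a proof.
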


Petrovic and Thomassen \cite{Petrovic2006} proved that a $k$-tournament $H$ on $n$ vertices is vertex-pancyclic if and only if $H$ is strong for sufficiently large $n$, which is an extension of Moon's theorem\cite{Moon1966}. 

Yang \cite{Yang2009} improved their result as follows.
\begin{theorem}\label{Yang}\cite{Yang2009}
Let $H$ be a $k$-tournament on $n$ vertices. When
\begin{enumerate}[label=(\roman*)]
    \item $k = 3$ and $n\geq 15$,
    \item $k = 4$ and $n\geq 11$,
    \item $k\geq 5$ and $n\geq k+4$, or
    \item $k\geq 8$ and $n\geq k+3$,
\end{enumerate}
 $H$ is vertex-pancyclic if and only if $H$ is strong.
\end{theorem}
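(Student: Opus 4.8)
The plan is to establish both implications, where the direction ``vertex-pancyclic $\Rightarrow$ strong'' is immediate and the direction ``strong $\Rightarrow$ vertex-pancyclic'' carries all the content through a reduction to ordinary tournaments. For the easy direction, if $H$ is vertex-pancyclic then in particular every vertex lies on an $n$-cycle, i.e.\ $H$ has a Hamiltonian cycle; since consecutive hyperarcs along a cycle are distinct by definition, any two vertices are joined by a sub-path of this cycle, so $H$ is strong. This requires no hypothesis on $n$ or $k$.

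For the reverse direction I would exploit the correspondence announced in the abstract. For an ordered pair $(u,v)$ write $d(u,v)$ for the number of hyperarcs $a$ with $u\,a\,v$; since the remaining $k-2$ vertices of such a hyperarc range over all $(k-2)$-subsets of $V\setminus\{u,v\}$, we have $d(u,v)+d(v,u)=\binom{n-2}{k-2}$. First I would build from the strong $k$-tournament $H$ an ordinary strong tournament $T$ on the same vertex set whose every arc $u\to v$ is \emph{witnessed}, meaning $d(u,v)$ is large. The natural choice orients each pair toward its majority direction, which forces $d(u,v)\ge\tfrac{1}{2}\binom{n-2}{k-2}$ on every arc; the structural result of Gutin and Yeo underlying Theorem~\ref{Gutin} is what I would invoke to guarantee that such a witnessed tournament can be taken strong. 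Applying Moon's theorem \cite{Moon1966} to $T$ then yields, for every vertex $x$ and every length $\ell\in\{3,\dots,n\}$, an $\ell$-cycle of $T$ through $x$.

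The remaining step is to \emph{lift} each such cycle $x_1\to x_2\to\cdots\to x_\ell\to x_1$ of $T$ to an $\ell$-cycle of $H$ through $x$. For each arc $x_i\to x_{i+1}$ let $S_i$ be the set of hyperarcs witnessing it; I need a system of distinct representatives for $S_1,\dots,S_\ell$, which by Hall's theorem exists as soon as every subfamily has a large enough union. The geometry here is favourable: a hyperarc in $S_i\cap S_j$ must contain all endpoints of both arcs, so non-adjacent arcs satisfy $|S_i\cap S_j|\le\binom{n-4}{k-4}$ and adjacent arcs $|S_i\cap S_j|\le\binom{n-3}{k-3}$, both small against the sizes $|S_i|\ge\tfrac{1}{2}\binom{n-2}{k-2}$. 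A greedy selection, or Hall applied to these nearly-disjoint large sets, then produces $\ell$ distinct hyperarcs realizing the cycle in $H$.

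The main obstacle, and the source of the four cases (i)--(iv), is the counting that makes the lifting survive the tightest regimes. When $n$ is close to $k$ the witnessing sets shrink and their pairwise intersections become comparatively large, so Hall's condition must be checked by hand at the extreme lengths $\ell=3$ and $\ell=n$, where the constraints are simultaneously tightest; moreover for small $k$ the witnessing count $\binom{n-2}{k-2}$ grows only slowly in $n$, which is precisely why $k=3$ and $k=4$ force the large absolute thresholds $n\ge 15$ and $n\ge 11$, while for $k\ge 8$ a surplus of $n\ge k+3$ already leaves combinatorially many hyperarcs. The four thresholds are exactly the points at which this verification can be pushed through. A secondary but essential obstacle is ensuring that the witnessed tournament $T$ inherits strong connectivity from $H$; this is where the intrinsic relationship drawn from Gutin and Yeo does the real work, replacing the delicate ad hoc cycle constructions of the original argument by a single appeal to Moon's theorem.
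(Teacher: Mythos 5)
Your reduction has the right overall shape (build a strong tournament on $V(H)$ whose arcs are backed by hyperarcs, apply Moon's theorem, lift the cycles back to $H$), and this is indeed the strategy of the paper. But the specific construction you propose --- orient every pair in its \emph{majority} direction so that $d(u,v)\ge\tfrac12\binom{n-2}{k-2}$ on every arc --- breaks down at the step you flag as ``secondary but essential'': the majority tournament of a strong $k$-tournament need not be strong, and nothing in Gutin--Yeo supplies this. Concretely, take $k=3$ and a vertex $y$ such that exactly one triple containing $y$, say $\{y,a,b\}$, is ordered $(y,a,b)$, while every other triple containing $y$ places $y$ last. Then $H$ can easily be strong ($y$ reaches $a$ and $b$ through that one hyperarc, and everyone reaches $y$), yet for every $u$ we have $d(y,u)\le 1<\tfrac12(n-2)$, so every majority arc points \emph{into} $y$ and your tournament $T$ has a sink. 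Since the majority orientation is essentially forced (there is no freedom to ``take it strong''), the gap cannot be closed within your framework: to restore strong connectivity you must be willing to use minority arcs on some pairs, at which point the uniform lower bound $\tfrac12\binom{n-2}{k-2}$ that powers your Hall/SDR lifting is lost on exactly those arcs.

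The paper routes around this differently. It first applies Theorem~\ref{Gutin} to extract a Hamiltonian cycle $C$ of $H$ itself; the $n$ hyperarcs of $C$ generate the $n$ arcs of a Hamiltonian cycle of the tournament being built, which guarantees strong connectivity by fiat (even if some of these arcs are minority arcs). It then runs a single global bipartite matching between the remaining $\binom{n}{2}-n$ pairs and the remaining hyperarcs (Lemma~\ref{AB} plus the counting in Lemmas~\ref{k=3}--\ref{general-matching}), producing a tournament $T\in\mathcal{T}_H$ in which \emph{every} arc is generated by a distinct hyperarc. This makes your per-cycle lifting step unnecessary: any cycle of $T$ is automatically a cycle of $H$. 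Your lifting step itself is salvageable in the stated ranges (the double-count ``each hyperarc witnesses at most $\binom{k}{2}$ arcs'' gives Hall's condition once $\tfrac12\binom{n-2}{k-2}\ge\binom{k}{2}$), but it is moot until the strong connectivity of $T$ is actually secured, and that is the one point where your appeal to Gutin--Yeo does not do what you need it to do.
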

In 2013, Li, Li, Guo, and Surmacs\cite{Li2013} solved this problem completely. 
\begin{theorem}\label{Li}\cite{Li2013}
When $k\geq 3$ and $n\geq k+2$, an $n$-vertex $k$-tournament $H$ is vertex-pancyclic if and only if $H$ is strong.
\end{theorem}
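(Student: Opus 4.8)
The plan is to prove both implications, with the reverse direction carrying all of the weight and being handled exactly along the lines advertised in the abstract, namely by passing from the $k$-tournament $H$ to an ordinary tournament on the same vertex set. The forward direction is immediate: if $H$ is vertex-pancyclic then, taking $l=n$, every vertex lies on an $n$-cycle; such a cycle visits every vertex in cyclic order, so following it certifies a path between any ordered pair, and $H$ is strong.

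For the reverse direction, assume $H$ is strong. I would first associate to $H$ its \emph{support digraph} $D$ on $V(H)$, placing an arc $(u,v)$ whenever some hyperarc $a$ satisfies $u\,a\,v$. Since $n\ge k$, every pair of vertices lies in at least one hyperarc, and in that hyperarc one of them precedes the other; hence between any two vertices $D$ has at least one arc, i.e.\ $D$ is semicomplete. Moreover any path of $H$ is in particular a directed walk of $D$, so strong connectivity passes from $H$ to $D$. By Theorem~\ref{Gutin}, $H$ has a Hamiltonian cycle, whose projection is a Hamiltonian cycle $C$ of $D$. Retaining the arcs of $C$ and deleting one arc from every remaining $2$-cycle of $D$ yields a spanning tournament $T$ with $A(T)\subseteq A(D)$ that still contains $C$, hence is strong; this is precisely the intrinsic relationship between the strong $k$-tournament $H$ and a strong tournament $T$. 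Applying Moon's theorem \cite{Moon1966} to $T$, every vertex of $T$ lies on an $l$-cycle for each $l\in\{3,\dots,n\}$, and since $A(T)\subseteq A(D)$ each of these is a cycle of $D$ as well.

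It then remains to lift a given $l$-cycle $y_1\to y_2\to\cdots\to y_l\to y_1$ of $D$, through the prescribed vertex, to a genuine $l$-cycle of $H$. For each consecutive pair let $S_i$ denote the set of hyperarcs $a$ with $y_i\,a\,y_{i+1}$; each $S_i$ is nonempty because the arc lies in $D$, and I must select \emph{pairwise distinct} representatives $a_i\in S_i$, which is exactly a system of distinct representatives for $(S_1,\dots,S_l)$. I would verify Hall's condition for this family, exploiting that every pair of vertices lies in $\binom{n-2}{k-2}$ hyperarcs, a quantity that is at least $\binom{k}{2}\ge 3$ once $n\ge k+2$ and $k\ge 3$; this lower bound keeps the representative sets from collapsing and is precisely where the hypotheses $n\ge k+2$ and $k\ge 3$ enter.

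The main obstacle is this last step. A single hyperarc, being a linear order on $k$ vertices, can simultaneously realize several consecutive arcs of the cycle, so the sets $S_i$ may overlap heavily and Hall's condition is not automatic from $|S_i|\ge 1$ alone. I expect to need either the freedom to choose, among the $l$-cycles supplied by Moon's theorem, one whose arcs are sufficiently well supported, or a direct exchange argument that reroutes the chosen hyperarcs along the cycle whenever a clash arises. In the tight regime $n=k+2$ this bookkeeping is the genuine crux of the argument, whereas for larger $n-k$ the sheer abundance of hyperarcs through each pair makes the selection routine.
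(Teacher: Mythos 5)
Your forward direction is fine, and the first half of the reverse direction (semicomplete support digraph $D$, a Hamiltonian cycle of $H$ projecting to a Hamiltonian cycle of $D$, a spanning strong tournament $T$ with $A(T)\subseteq A(D)$, Moon's theorem) is sound. But the proof has a genuine gap exactly where you flag it: the lifting step. The sets $S_i$ of hyperarcs realizing the arc $(y_i,y_{i+1})$ are only guaranteed to be nonempty, not of size $\binom{n-2}{k-2}$ --- a pair $\{y_i,y_{i+1}\}$ lies in $\binom{n-2}{k-2}$ hyperarcs, but all of them except one may order the pair the wrong way, since $T$ was built by picking one orientation per $2$-cycle of $D$. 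A single hyperarc can realize up to $k-1$ consecutive arcs of the cycle, so several $S_i$ can be equal singletons and Hall's condition for a system of distinct representatives can fail. Since you explicitly leave this step as "I expect to need either\dots or\dots", the argument is not a proof.

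The paper avoids this per-cycle difficulty by a different (global) construction: Theorem~\ref{main theorem hypertournament} produces a tournament $T\in\mathcal{T}_H$, meaning \emph{every} arc of $T$ is generated by a \emph{distinct} hyperarc of $H$. This is done once and for all by a bipartite matching between $E(K_n)$ minus the Hamiltonian cycle edges and the non-cycle hyperarcs, with Hall's condition verified via the degree bounds of Lemmas~\ref{k=3}, \ref{k=4} and \ref{general-matching}. With such a $T$ in hand, every cycle of $T$ lifts to a cycle of $H$ automatically, with no SDR needed per cycle. Note, however, two things. First, this global method only works for $3\le k\le n-3$ and $n\ge 7$, and the Remark after Lemma~\ref{general-matching} shows it provably fails for $k=3$, $n\in\{5,6\}$; so even the paper does not reprove Theorem~\ref{Li} in its full range $n\ge k+2$ --- that statement is cited from Li, Li, Guo and Surmacs \cite{Li2013}, whose proof uses different techniques. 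Second, your weaker tournament $T$ (arcs merely supported, not injectively generated) exists in the full range, but that is precisely why the lifting problem you run into is hard there; if you want to complete your route you would need a genuinely new exchange or rerouting argument for the tight cases, and that is the actual content of \cite{Li2013} and of the case analysis in Section~\ref{sec:$k$-tournament}.
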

Besides Hamiltonian property and vertex-pancyclicity, there are some other properties of hypertournaments being studied in a large number of papers, see \cite{Ai2022, Brcanov2013, Guo2014, Li2016, Petrovic2019}.

 Petrovic, Thomassen\cite{Petrovic2006}, and Yang \cite{Yang2009} constructed a certain strong semicomplete digraph $D_H$  from a given strong $k$-tournament $H$ to prove that $H$ is vertex-pancyclic. In other words, they gave another proof of Theorem \ref{Gutin} for some conditions of $k,n$. 
 With the help of Theorem \ref{Gutin} \cite{Gutin1997}, we find a deeper relationship between a strong $k$-tournament and a corresponding strong tournament, which can imply some known results immediately.
\begin{theorem}\label{main theorem hypertournament}
    If $ 3 \le k \le n-3$ and $n\ge 7$, then there is a strong tournament $ T\in \mathcal{T}_H $ where $H$ is a strong $k$-tournament of $n$ vertices. 
\end{theorem}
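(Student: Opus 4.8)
The plan is to read $\mathcal{T}_H$ as the family of tournaments $T$ on $V(H)$ that are \emph{compatible} with $H$: every arc $(u,v)$ of $T$ is witnessed by a hyperarc in which $u$ precedes $v$, and (since the whole point is to lift the structure of $T$ back into $H$) these witnesses can be chosen pairwise distinct across the arcs of $T$. Under this reading, producing a \emph{strong} member of $\mathcal{T}_H$ splits into two tasks: first build a strongly connected skeleton out of compatible, distinctly witnessed arcs, and then complete it to a full tournament without leaving $\mathcal{T}_H$.

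First I would invoke Theorem \ref{Gutin}. Since $H$ is a strong $k$-tournament with $k\ge 3$ and $n\ge k+3 > k+2$, Theorem \ref{Gutin} supplies a Hamiltonian cycle $x_1a_1x_2a_2\cdots x_na_nx_1$ of $H$. Each step $x_ia_ix_{i+1}$ says that $x_i$ precedes $x_{i+1}$ in $a_i$, so the $n$ consecutive ordered pairs $(x_i,x_{i+1})$ are exactly a set of compatible arcs, and because the $a_i$ are pairwise distinct these arcs already come equipped with distinct witnesses. Orienting these pairs as the spanning cycle $x_1\to x_2\to\cdots\to x_n\to x_1$ forces any tournament extending it to be strong, so connectivity is handled almost for free and all the remaining difficulty is pushed into the completion step.

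For each of the other $\binom{n}{2}-n$ pairs $\{x_i,x_j\}$ I would choose a direction witnessed by some hyperarc; since $\{x_i,x_j\}$ lies in $\binom{n-2}{k-2}$ hyperarcs, at least one direction is always available, and the skeleton already guarantees strong connectivity for any such choice. The genuine obstacle, which I expect to be the crux, is to make all these choices \emph{simultaneously} realizable inside $\mathcal{T}_H$, i.e.\ to assign pairwise distinct witnessing hyperarcs to all $\binom{n}{2}$ arcs at once. I would dispose of this with a system-of-distinct-representatives (Hall) argument: the hypotheses $3\le k\le n-3$ give $\binom{n}{k}=\binom{n}{n-k}\ge\binom{n}{3}\ge\binom{n}{2}$ once $n\ge 5$, and more importantly guarantee that each arc has many candidate witnesses, so that Hall's condition for the whole arc set can be verified once $n\ge 7$ (the extra slack over Gutin's $n\ge k+2$ is precisely what funds these distinct witnesses after the $n$ of them used by the cycle).

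Finally I would record the sanity check that no forced arcs can destroy strongness: if some proper nonempty $S\subseteq V(H)$ had all of its cross-pairs forced to point from $S$ to $V(H)\setminus S$, then no hyperarc could place a vertex of $V(H)\setminus S$ before a vertex of $S$, so $H$ would admit no path from $V(H)\setminus S$ into $S$, contradicting strongness of $H$. Hence no forced dominant cut exists, and the Hamiltonian skeleton from Theorem \ref{Gutin} together with the Hall-based completion yields a strong $T\in\mathcal{T}_H$. The main obstacle throughout is the distinct-witness requirement of the completion; the Hamiltonicity input and the strongness of $H$ settle connectivity cheaply, while the counting hypotheses $k\le n-3$ and $n\ge 7$ are exactly what make the SDR exist.
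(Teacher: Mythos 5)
Your overall architecture matches the paper's proof exactly: take a Hamiltonian cycle of $H$ from Theorem \ref{Gutin}, let its $n$ distinct hyperarcs witness the $n$ cycle arcs, and then assign pairwise distinct witnessing hyperarcs to the remaining $\binom{n}{2}-n$ pairs via Hall's theorem, the spanning cycle guaranteeing strongness of any completion. However, there is a genuine gap at the step you yourself flag as the crux: you never actually verify Hall's condition, and the count you offer, $\binom{n}{k}\ge\binom{n}{2}$, is only a global comparison of the two sides of the bipartite graph; it says nothing about neighbourhoods of subsets. The correct setup is a bipartite graph between the non-cycle pairs $A$ and the non-cycle hyperarcs $B$, where each hyperarc has degree at most $\binom{k}{2}$ and each pair $\{x_i,x_j\}$ has degree $\binom{n-2}{k-2}$ \emph{minus the number of cycle hyperarcs containing that pair} --- this loss term is the whole difficulty. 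A degree-form sufficient condition for Hall (if every vertex of $A$ has degree at least $p$ and every vertex of $B$ at most $p$, a matching covering $A$ exists) reduces the problem to the inequality $\binom{k}{2}\le\binom{n-2}{k-2}-(\text{loss})$, which the paper checks for $k=3,n\ge 9$, $k=4,n\ge 8$, and $k\ge 5,n\ge k+3$.

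This leaves three boundary cases, $k=3$ with $n\in\{7,8\}$ and $k=4$ with $n=7$, where the simple degree inequality fails and your argument would break down entirely. The paper handles these by proving structural lemmas about how many hyperarcs of a Hamiltonian cycle can contain a fixed nonadjacent pair (at most four for $k=3$, with strong restrictions on how many pairs can attain the extremes when $n=7,8$), and then running a refined counting argument against a hypothetical Hall violator $S$ with $|N(S)|\le|S|-1$. These cases are not an afterthought: the paper's own remark notes that for $k=3$ and $n\in\{5,6\}$ the statement is false, so the existence of the system of distinct representatives genuinely depends on delicate counting near the boundary and cannot be obtained from the generic ``each pair has many candidate witnesses'' heuristic. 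Your final paragraph about forced dominant cuts is also superfluous, since once the Hamiltonian cycle is embedded in $T$ every completion is strong regardless of how the other pairs are oriented.
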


The outline of the rest of the paper is as follows. 
We give some definitions and lemmas which will be used in the following proofs. 
In Section \ref{sec:$k$-tournament}, we prove Theorem \ref{main theorem hypertournament} and give a more straightforward proof for some previously known properties about strong $k$-tournaments. 
We devote Section \ref{sec:$k$-hyper-digraph} to investigating the connection between $k$-hyperdigraphs and their corresponding digraphs, and extend some results to $k$-hyperdigraphs.

\section{Preliminary}\label{sec:lemma}

The terminology not introduced in this paper can be found in \cite{Bang-Jensen2000}. 
Let $H$ be a $k$-tournament with $n$ vertices where $3\leq k\leq n-2$ and $T$ a tournament with $V(T)=V(H)$. For a hyperarc $a$ of $A(H)$ and an arc $e=(v_i,v_j)$ of $A(T)$, we call $e$ is generated by $a$ if $v_i,v_j\in V(a)$ and $v_i$ precedes $v_j$. Moreover, we say a tournament $T$ is generated by a $k$-tournament $H$ if each arc of $T$ is generated by a unique hyperarc of $H$. We denote by $\mathcal{T}_H$ the set of all tournaments generated by $H$.

For $k\geq 3$ and $n\geq k+2$, let $H$ be a strong $k$-tournament on $n$ vertices. Note that $H$ contains a Hamiltonian cycle $C$ by Theorem \ref{Gutin}, which implies that the hyperarcs of $C$ can generate a Hamiltonian cycle of a strong tournament $T$. If other arcs of $T$ can be generated by the remaining hyperarcs of $H$, then  $T \in \mathcal{T}_H$. To achieve that, we need the following lemmas.

\begin{lemma}\label{AB}
Let $G$ be a bipartite graph with partite sets $U$ and $W$, and $p$ a positive integer. If $d(u)\geq p$ for each $u \in U$ and $d(w)\le p$ for each $w \in W$, then $G$ has a matching covering $U.$
\end{lemma}

\begin{proof}
   Let $S$ be a subset of $U$ and let $E$ be the set of edges of $G$ between $S$ and $N(S)$. Since $d(u)\geq p$ for each $u \in U$ and $d(w)\le p$ for each $w \in W$, we have $$p \left | N(S)\right | \ge  \left | E\right |  \ge  p\left | S\right |.$$ Since $p\ge 1$, it follows that $ \left | N(S)\right |\ge \left | S\right |$. By Hall's theorem \cite{Hall1935}, there is a matching of $G$ covering $U.$
\end{proof}

 \begin{lemma}\label{k=3}
     Let $H$ be a $3$-hypertournament with $n$ vertices and $C$ a Hamiltonian cycle of $H$. We have every pair of nonconsecutive vertices in $C$ that can be contained in at most four hyperarcs of $C$.
     More precisely, when $n=8$ there are at most two nonadjacent pairs contained in four hyperarcs of $C$.    
     When $n=7$, there are no two nonadjacent pairs contained in four hyperarcs of $C$, and at most two nonadjacent pairs contained in at least three hyperarcs of $C$. If there is one nonadjacent pair contained in four hyperarcs of $C$ and one in three hyperarcs, then other pairs are contained in at most one hyperarc.
 \end{lemma}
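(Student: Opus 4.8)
The plan is to record, for each hyperarc of the Hamiltonian cycle $C=x_1a_1x_2a_2\cdots x_na_nx_1$ (all indices read modulo $n$), its unique third vertex. Writing $a_t=\{x_t,x_{t+1},z_t\}$ as a $3$-set with $z_t\notin\{x_t,x_{t+1}\}$, the fact that $H$ is a $3$-tournament forces the $n$ triples $\{x_t,x_{t+1},z_t\}$ to be pairwise distinct. For the first assertion, observe that a nonconsecutive pair $\{x_i,x_j\}$ can never occupy the consecutive slots $\{x_t,x_{t+1}\}$ of a hyperarc, so if $a_t$ contains both $x_i$ and $x_j$ then one of them equals $z_t$ and the other is $x_t$ or $x_{t+1}$. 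This forces $t\in\{i-1,i\}$ with $z_t=x_j$, or $t\in\{j-1,j\}$ with $z_t=x_i$; hence the only candidate hyperarcs are $a_{i-1},a_i,a_{j-1},a_j$, and the number containing $\{x_i,x_j\}$ equals $|\{t\in\{i-1,i\}:z_t=x_j\}|+|\{t\in\{j-1,j\}:z_t=x_i\}|\le 4$.

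Next I would analyse pairs meeting all four candidates. Such a pair is characterised by $z_{i-1}=z_i=x_j$ and $z_{j-1}=z_j=x_i$, i.e.\ by two ``blocks'' of two equal consecutive values. Two facts then yield the counts. If $|i-j|=2$, two of the four candidates coincide as $3$-subsets (for instance $a_{i-1}=a_j=\{x_{i-1},x_i,x_j\}$ when $j=i-2$), violating distinctness; so a distance-$2$ pair lies in at most three hyperarcs. Moreover, for two \emph{distinct} pairs each lying in four hyperarcs, the four position-blocks are pairwise disjoint, since any shared position would have to carry two distinct vertices as its common value. Thus $m$ pairs in four hyperarcs use $4m$ distinct positions, so $4m\le n$; this gives $m\le 2$ for $n=8$ and $m\le 1$ for $n=7$, which is exactly the stated ``at most two'' and ``no two''.

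For the remaining $n=7$ statements the engine is a hit-count: a pair in at least $m$ hyperarcs receives at least $m$ \emph{hits}, a hit being an assignment $z_s=x_c$ that places the pair in $a_s$, and each position $s$ supplies at most two hits, landing only on the two prescribed pairs $\{x_s,z_s\}$ and $\{x_{s+1},z_s\}$ (which share $z_s$ and have consecutive other endpoints). Summing hits yields $3f_3+4f_4\le 2n=14$, where $f_m$ counts pairs in exactly $m$ hyperarcs; but this alone only gives $f_3\le 4$, so I would combine it with the rigid description of where hits land. If three pairs were each in at least three hyperarcs they would demand at least nine hits from seven positions, forcing at least two ``double'' positions whose two prescribed pairs both lie among the three; each such position pins two of the three pairs to a common-vertex cherry with adjacent feet, and I would show that two such cherries cannot coexist with the block each pair owns and with triple-distinctness, giving $f_3+f_4\le 2$.

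Finally, for the conditional I would fix the unique pair $P=\{x_i,x_j\}$ in four hyperarcs. Its four hyperarcs all contain $x_i$ and $x_j$ and have four \emph{distinct} third vertices, so every pair $Q\neq P$ lies in at most one of them, and only the three remaining hyperarcs have free third vertices. A direct check, using once more that the seven triples are distinct, then shows that these three can lift at most one further pair to three hyperarcs, after which all remaining capacity is spent and every other pair is met at most once. I expect this finitary $n=7$ analysis to be the main obstacle: the ``at most four'' bound and the disjoint-block counts are clean and uniform in $n$, whereas the sharp $n=7$ conclusions rest on the interplay of the hit-count, block-disjointness, and distinctness of the seven triples, and appear to need a bounded but delicate case analysis on the admissible free third vertices $z_s$.
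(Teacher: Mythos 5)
Your argument for the uniform bound of four, for the $n=8$ count, and for the impossibility of two four-hyperarc pairs when $n=7$ is sound and essentially the same as the paper's: identify the four candidate hyperarcs $a_{i-1},a_i,a_{j-1},a_j$, note that a distance-two pair collapses two candidates into one $3$-set, and pack disjoint four-position blocks into $n$ positions. (One small repair needed: your justification of block-disjointness, ``any shared position would have to carry two distinct vertices,'' only covers the case where the two pairs are vertex-disjoint; if they share a vertex, the two demands at the shared position can coincide, and you must then pass to the \emph{other} block of each pair, where the demands $z=x_i$ versus $z=x_{i\pm 1}$ do clash.)

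The genuine gap is that the two $n=7$ statements the paper actually needs in Case 1 of Theorem \ref{main theorem hypertournament} --- that at most two nonadjacent pairs lie in at least three hyperarcs, and that a four-hyperarc pair together with a three-hyperarc pair forces every other pair into at most one hyperarc --- are not proved. You correctly observe that the hit-count $3f_3+4f_4\le 14$ is too weak and you name plausible extra ingredients (double positions/cherries, distinctness of the seven triples, the capacity of the three free hyperarcs), but the decisive steps are deferred with ``I would show that two such cherries cannot coexist'' and ``a direct check \dots\ then shows.'' Those deferred steps \emph{are} the content of the lemma: the paper's proof consists precisely of carrying them out, by pinning down the common vertex of two heavy pairs, listing the at most four candidate hyperarcs of a putative third heavy pair and showing two of them are already occupied, and finally writing down the essentially unique configuration $\{v_7,v_1,v_5\},\{v_1,v_2,v_5\},\{v_1,v_4,v_5\},\{v_1,v_5,v_6\},\{v_3,v_6,v_7\},\{v_2,v_3,v_7\},\{v_3,v_4,v_7\}$ and checking the remaining pairs by hand. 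Note also that your capacity argument for the conditional part, as sketched, only bounds the other pairs by two occurrences (each of the four hyperarcs through the heavy pair donates one extra hit to a pair of the form $\{x_{i\pm1},x_j\}$ or $\{x_{j\pm1},x_i\}$), so the ``at most one'' conclusion genuinely requires the explicit configuration. Until these case analyses are written out, the proposal is an outline rather than a proof of the $n=7$ assertions.
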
    
 \begin{proof}
      Let $C=v_1 a_1 v_2 \dots a_{n-1} v_n a_n v_1$ and $i,j\in [n]$ where $1< j-i< n-1$. Since $k=3$ and $i,j$ are not consecutive, the hyperarcs of $C$ containing both $v_i$ and $v_j$ must contain exactly one vertex of $\{ v_{i-1}, v_{i+1}, v_{j-1}, v_{j+1}\}$ $\pmod{n}$. So the number of hyperarcs that contain $v_i$ and $v_j$ is at most four. If the number is exactly four, we have $2< j-i< n-2$.

      When $n=8$, assume the nonadjacent pair $(v_i,v_j)$ is contained in four hyperarcs of $C$. By the above discussions, the four hyperarcs are $\{ v_{i-1}, v_i, v_j\}$, $\{ v_i,v_{i+1}, v_j\}$, $\{ v_i,v_{j-1}, v_j\}$, $\{v_i, v_j,v_{j+1}\}$ $\pmod{n}$. 
      If there is the other pair contained in four hyperarcs, it must be $(v_{i+2},v_{j+2})$ $\pmod{n}$ by $2<j-i<n-2$.
      For these two pairs, note that the corresponding hyperarc sets are disjoint. Since when $n=8$, $C$ contains eight hyperarcs, there are at most two nonadjacent pairs contained in four hyperarcs of $C$, and when $n=7$, there are no two nonadjacent pairs contained in four hyperarcs of $C$.

      When $n=7$, $C$ contains seven hyperarcs. For any two nonadjacent pairs contained in at least three hyperarcs of $C$, if their corresponding hyperarc sets are disjoint, then there are at most two nonadjacent pairs contained in at least three hyperarcs of $C$. If not, then there is at most one hyperarc containing them, and two pairs have exactly one common vertex. Assume the pairs are $(v_i,v_j)$ and $(v_i,v_\ell)$, and there is a hyperarc $\{v_i,v_j,v_\ell\}$ in $C$ containing them. Since $v_i$ is not adjacent to $v_j,v_l$ and $\{v_i,v_j,v_\ell\}\in E(C)$, without loss of generality, assume $\ell=j-1$ (module $n$). Some hyperarcs in $C$ are $\{v_i,v_{\ell},v_j\}$, $\{v_i,v_j,v_{j+1}\}$, $\{v_{i-1},v_i,v_j\}$, $\{v_i,v_\ell,v_{i+1}\}$, $\{v_i,v_{\ell-1},v_\ell\}$, where $\ell=j-1$.
      Suppose that there is another nonadjacent pair contained in at least three hyperarcs of $C$, so the pair is contained in some hyperarc which contains $(v_i,v_j)$ or $(v_i,v_\ell)$. Since $(v_i,v_j)$ and $(v_i,v_\ell)$ occur three times, the pair can not contain $v_i$. Then the pair must have one vertex of $\{v_j,v_\ell\}$, and the other vertex is adjacent to $v_i$. Without loss of generality, assume the pair is $(v_{i-1},v_j)$. All possible hyperarcs in $C$ which can contain $(v_{i-1},v_j)$ are $\{v_{i-2},v_{i-1},v_j\}$, $\{v_{i-1},v_{i},v_j\}$, $\{v_{i-1},v_{j-1},v_j\}$, $\{v_{i-1},v_{j},v_{j+1}\}$. Since $\{v_i,v_{j-1},v_j\}$, $\{v_i,v_j,v_{j+1}\}\in E(C)$, there are no three hyperarcs in $C$ containing $(v_{i-1},v_j)$, a contradiction. Hence there are at most two nonadjacent pairs contained in at least three hyperarcs of $C$.
      
      By symmetry, we can assume the nonadjacent pair $(v_1,v_5)$ contained in four hyperarcs of $C$. If there is a nonadjacent pair contained in three hyperarcs of $C$, then the pair has a vertex that is not adjacent to $v_1,v_5$, which is $v_3$. The pair is $(v_3,v_6)$ or $(v_3,v_7)$. By symmetry, assume it is $(v_{3},v_7)$, then the hyperarcs are $\{ v_{7}, v_1, v_5\}$, $\{v_{1}, v_2, v_5\}$, $\{v_{1}, v_4, v_5\}$, $\{v_{1},v_5, v_6\}$, $\{v_{3},v_6, v_7\}$, $\{v_{2},v_3, v_7\}$, $\{v_{3},v_4, v_7\}$, and observe that other pairs occur at most once. Otherwise, all pairs occur at most twice except for $(v_1,v_5)$.
 \end{proof}

\begin{lemma}\label{k=4}
     Let $H$ be a $4$-hypertournament with $7$ vertices and $C$ a Hamiltonian cycle of $H$. We have that any two different nonadjacent pairs can be contained in at most four same hyperarcs of $C$.
\end{lemma}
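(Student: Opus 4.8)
The plan is to bound the number of hyperarcs of $C$ containing \emph{both} pairs by splitting on how many vertices the two nonadjacent pairs share. Since the two pairs are distinct $2$-element sets, their intersection has size $0$ or $1$; I would treat these two cases separately and, in each, reduce the question to counting the $4$-subsets of $V(H)$ that contain a prescribed core. The one structural fact I would record first is that, because $H$ is a $4$-tournament, every $4$-subset of $V(H)$ is the vertex set of exactly one hyperarc, while the hyperarcs $a_1,\dots,a_7$ of $C$ are pairwise distinct; hence distinct hyperarcs of $C$ carry distinct vertex sets, and the number of hyperarcs of $C$ whose vertex set contains a fixed $S$ is at most the number of $4$-subsets of $V(H)$ containing $S$.

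In the disjoint case the two pairs span four distinct vertices, so a hyperarc of $C$ containing both must have exactly those four vertices as its vertex set; there is at most one such hyperarc, well below the claimed bound. In the overlapping case the two pairs span exactly three vertices, say $\{a,b,c\}$, and any hyperarc of $C$ containing both is precisely a $4$-subset $\{a,b,c,x\}$ with $x\in V(H)\setminus\{a,b,c\}$. Since $|V(H)\setminus\{a,b,c\}|=7-3=4$, there are at most four admissible $4$-subsets, hence at most four such hyperarcs of $C$. Combining the two cases yields the bound of four.

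The nearest thing to an obstacle is the bookkeeping that distinct hyperarcs of $C$ have distinct vertex sets, which is exactly what rules out counting a single $4$-subset more than once; everything else is elementary counting. In particular, neither the cyclic order of $C$ nor the nonadjacency hypothesis is actually needed to reach the bound of four—nonadjacency only records which application this lemma is tailored for. If one wanted a sharper constant, one could further use that a hyperarc of $C$ must contain a pair of cyclically consecutive vertices, which restricts the admissible choices of the extra vertex $x$ in the overlapping case; but for the stated bound of four this refinement is unnecessary.
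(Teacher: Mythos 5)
Your proposal is correct and follows essentially the same argument as the paper: split on whether the two pairs are disjoint or share a vertex, and in the overlapping case count the $4$-subsets of the $7$-vertex set containing the three spanned vertices, giving at most $7-3=4$ hyperarcs. The paper's proof is a two-line version of exactly this counting, with the distinctness of the vertex sets of the hyperarcs of $C$ left implicit.
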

\begin{proof}
     Let $(v_i,v_j)$ and $(v_k,v_\ell)$ be two nonadjacent pairs. If $ v_i,v_j,v_k,v_\ell$ are four distinct vertices, then there is at most one hyperarc containing all of them. Otherwise $|\{v_i,v_j,v_k,v_\ell\}|=3$, then there are at most four hyperarcs containing $\{v_i,v_j,v_k,v_\ell\}$ as $n=7$.
\end{proof}

We give the following lemma based on Lemma 8 in Yang \cite{Yang2009}.
\begin{lemma}\label{general-matching}
If
\begin{enumerate}[label=(\roman*)]
    \item $k = 3$ and $n \ge 9$,
    \item $k = 4$ and $n \ge 8$,
    \item $k \ge 5$ and $n \ge k+3$,
\end{enumerate}
then \begin{equation*}
    \binom{k}{2} \le \binom{n-2}{k-2} - 4 \quad\text{for $k = 3$}
\end{equation*}
and \begin{equation}\label{k>4}
   \binom{k}{2} \le \binom{n-2}{k-2} - n \quad\text{for $k \ge 4$}
\end{equation}
\end{lemma}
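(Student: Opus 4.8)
The plan is to treat the three regimes separately, since $\binom{n-2}{k-2}$ behaves very differently for small and large $k$, and in each regime reduce the claim to an elementary polynomial inequality. The two cases $k=3$ and $k=4$ are essentially direct evaluations, while for $k\ge 5$ I would combine a monotonicity argument in $n$ with a single base-case check.

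For $k=3$ I would simply substitute: $\binom{3}{2}=3$ and $\binom{n-2}{k-2}=\binom{n-2}{1}=n-2$, so the claimed bound $\binom{n-2}{k-2}-4=n-6$ is at least $3$ precisely when $n\ge 9$, which is exactly the hypothesis. For $k=4$ the right-hand side is $\binom{n-2}{2}-n=\tfrac{(n-2)(n-3)}{2}-n$, and the inequality $6\le \tfrac{(n-2)(n-3)}{2}-n$ rearranges (clearing the denominator) to the quadratic $n^2-7n-6\ge 0$; I would verify it at $n=8$ and note that the quadratic is increasing for $n\ge 8$, which settles this case.

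For $k\ge 5$ and $n\ge k+3$ the idea is to fix $k$ and show that $f(n)=\binom{n-2}{k-2}-n$ is nondecreasing in $n$ on this range, so that it suffices to verify the inequality at the smallest admissible value $n=k+3$. By Pascal's rule,
\begin{equation*}
  f(n+1)-f(n)=\binom{n-1}{k-2}-\binom{n-2}{k-2}-1=\binom{n-2}{k-3}-1\ge 0,
\end{equation*}
since $n-2\ge k+1$ and $k-3\ge 2$ force $\binom{n-2}{k-3}\ge 1$. At $n=k+3$ one has $\binom{n-2}{k-2}=\binom{k+1}{3}=\tfrac{(k+1)k(k-1)}{6}$, and the target inequality $\tfrac{k(k-1)}{2}\le \tfrac{(k+1)k(k-1)}{6}-(k+3)$ simplifies, after multiplying through by $6$, to $k(k-1)(k-2)\ge 6(k+3)$. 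This is a cubic-versus-linear comparison: it holds at $k=5$ (where $60\ge 48$), and the difference $k^3-3k^2-4k-18$ has positive derivative for $k\ge 5$, so it stays nonnegative for all $k\ge 5$.

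The only real subtlety, rather than a genuine obstacle, is keeping the base cases straight: $k=4$ truly requires $n\ge k+4$ (the bound already fails at $n=7$, where $\binom{5}{2}-7=3<6$), which is why it cannot be folded into the uniform $k\ge 5$ argument that starts at $n=k+3$. Once the cases are split this way, every remaining step is routine polynomial arithmetic together with the monotonicity reduction above.
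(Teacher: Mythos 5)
Your proposal is correct and follows essentially the same route as the paper: a direct substitution for $k=3$, a reduction of the $n=k+3$ base case to the same cubic inequality $k^3-3k^2-4k-18\ge 0$ for $k\ge 5$, and the identical Pascal's-rule computation $\binom{n-2}{k-3}-1\ge 0$ to propagate the bound in $n$ (the paper folds $k=4$, $n\ge 8$ into this same induction rather than arguing via the quadratic directly, but that is a cosmetic difference). Your closing observation that $k=4$ genuinely fails at $n=7$ is a nice sanity check that matches why the lemma's hypotheses are stated as they are.
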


\begin{proof}
    Since $\binom{k}{2}=\binom{3}{2} =3 \le \binom{n-2}{k-2} - 4=n-6$ when $k = 3$ and $n \ge 9$, we only need to check  the case $k \ge 4$. If $n=k+3$, rewrite the inequality (\ref{k>4}) as $$k^2 + k + 6 \le 2\binom{k+1}{k-2}=\frac{1}{3} (k^3 -k).$$ Then $k^3 -3k^2 -4k -18 \ge 0$ holds for $k \ge 5$. For $k=4$, (\ref{k>4}) holds with $n = 8$. Since  increasing $n$ by 1 increases the right-hand side $\binom{n-2}{k-2} - n$ by $\binom{n-1}{k-2} - \binom{n-2}{k-2} - 1 = \binom{n-2}{k-3} -1$ which is non-negative for $k \ge 4, n\ge k+1$, and the left-hand side $\binom{k}{2}$ is independent of $n$, by applying an induction on $n$, the inequality (\ref{k>4}) holds for $k \ge 5, n \ge k+3$ and $k = 4, n \ge 8$.
\end{proof}
\noindent {\bf Remark} when $k=3$ and $n=5, 6$, since $\binom{n-2}{k-2} - 4 \leq 0$, we can construct a strong $3$-hypertournament $H$ such that there is no strong tournament $T\in \mathcal{T}_H$.
\section{Strong $k$-tournament}\label{sec:$k$-tournament}
With the lemmas proved in the previous section, we are ready to prove Theorem \ref{main theorem hypertournament}. Moreover, it gives a more straightforward proof of the vertex-pancyclicity and arc-pancyclicity for strong hypertournaments.

\noindent{\bf Proof of Theorem \ref{main theorem hypertournament}.}
    Let $H$ be a strong $k$-tournament with $n$ vertices where $ 3 \le k \le n-3$ and $n\ge 7$, and let $K_n$ be a complete graph with the same vertex set as $H$. By Theorem \ref{Gutin}, we can assume that $C=v_1 b_1 v_2 \dots b_{n-1} v_n b_n v_1$ is a Hamiltonian cycle in $H$. Consider a bipartite graph $G$ with partite sets $A= E(K_n) \setminus \{v_1v_2,v_2v_3,\dots,v_nv_1\}$ and $B= A(H) \setminus \{b_1,\dots,b_n \}$. For every $a \in A$ and $b \in B$, $G$ has an edge $ab$ if $a \subset\bar{b}$, where $\bar{b}$ denotes the set of vertices of $b$.
    We aim to prove that $G$ has a matching covering $A$. If such a matching exists, then $K_n$ has an orientation $T\in \mathcal{T}_H$ such that $T$ is strong. 

    Notice that the degree of any vertex in $B$ is at most $\binom {k}{2}$. When $k=3$, by Lemma \ref{k=3}, the degree of any vertex in $A$ is at least $\binom{n-2}{k-2}-4$; when $k\ge4$, the degree of a vertex in $A$ is at least $\binom{n-2}{k-2}-n$. By Lemma \ref{AB} and Lemma \ref{general-matching}, we have that $G$ has a matching covering $A$ except for three cases when $k=3, n=7$ or $n=8$, and $k=4, n=7$. Suppose $G$ has no matching covering $A$ for the three remaining cases. By Hall's theorem, there is a subset $S \subseteq A$ such that $\left | N_{G}(S) \right | \le \left | S \right | -1$.
    Let $E$ be the set of edges between $S$ and $N_{G}(S)$.

   \noindent {\bf Case 1}: $k=3$ and $n=7$.
    According to Lemma \ref{k=3}, we consider the following three subcases. Recall that in this case, the degree of any vertex in $B$ is at most $\binom {k}{2}=3$. So, we have that $3|S|-3\geq3|N_G(S)|\geq|E|$.
    
    \noindent {\bf Subcase 1:} There is a vertex in $A$ with degree $\binom{n-2}{k-2}-4$, a vertex in $A$ with degree $\binom{n-2}{k-2}-3$ and all other vertices in $A$ with degree at least $\binom{n-2}{k-2}-1$ in $G$.
    
    We have $$\left |E \right |\geq \left ( \binom{n-2}{k-2}-4 \right ) + \left ( \binom{n-2}{k-2}-3 \right )+ \left ( \binom{n-2}{k-2}-1 \right ) \left ( \left | S\right |-2 \right ) = 4 \left |S \right |-5.$$
    
    When $|S|\geq 3$, we have $|E|>3|S|-2$, a contradiction. 
    When $|S|=1$ or $2$, considering the degree of vertices in $A$, we have $|N(S)|\geq |S|$, a contradiction.
    
    \noindent {\bf Subcase 2:} There is a vertex in $A$ with degree $\binom{n-2}{k-2}-4$ and all other vertices in $A$ with degree at least $\binom{n-2}{k-2}-2$ in $G$.
    
    We have $$\left |E \right |\geq  \left ( \binom{n-2}{k-2}-4 \right ) + \left ( \binom{n-2}{k-2}-2 \right )  \left ( \left | S\right |-1 \right ) = 3 \left |S \right |-2.$$ 
    
    It contradicts to the fact that $3|S|-3\geq|E|$.

    \noindent {\bf Subcase 3:} There are at most two vertices in $A$ with degree $\binom{n-2}{k-2}-3$ and all other vertices in $A$ with degree at least $\binom{n-2}{k-2}-2$ in $G$.\\
    We have $$\left |E \right |\geq 2 \cdot\left ( \binom{n-2}{k-2}-3 \right ) + \left ( \binom{n-2}{k-2}-2 \right )  \left ( \left | S\right |-2 \right ) = 3 \left |S \right |-2.$$
    Similarly, it is a contradiction.

    \noindent {\bf Case 2:} $k=3$ and $n=8$. By Lemma \ref{k=3}, there are at most two vertices in $A$ with degree $\binom{n-2}{k-2}-4$ and all other vertices in $A$ with degree at least $\binom{n-2}{k-2}-3$ in $G$. Therefore,
     $$\left |E \right |\geq  2\cdot \left ( \binom{n-2}{k-2}-4 \right ) + \left ( \binom{n-2}{k-2}-3 \right )  \left ( \left | S\right |-2 \right ) = 3 \left |S \right |-2.$$
     On the other hand, we have $$\left |E \right |\leq \binom {k}{2}\left | N_{G}(S) \right |= 3\left | N_{G}(S) \right |\leq 3 \left |S \right |-3.$$
     Hence, $ 3 \left |S \right |-2\leq \left |E \right |\leq 3 \left |S \right |-3$, a contradiction.

    \noindent {\bf Case 3:} $k=4$ and $n=7$. Since every four-tuple of $\left \{v_1,\dots, v_7\right \}$ will contain a pair of consecutive vertices, the degree of any vertex in $B$ is at most $\binom {k}{2}-1$ in $G$. 
     And by Lemma \ref{k=4}, either there are a vertex in $A$ with degree $\binom{n-2}{k-2}-7$ and all other vertices in $A$ with degree at least $\binom{n-2}{k-2}-4$ in $G$ or there are at most one vertex in $A$ with degree $\binom{n-2}{k-2}-6$ and all other vertices in $A$ with degree at least $\binom{n-2}{k-2}-5$. 
     Therefore, $$\left |E \right |\geq \left ( \binom{n-2}{k-2}-7 \right ) + \left ( \binom{n-2}{k-2}-4 \right )  \left ( \left | S\right |-1 \right ) = 6 \left |S \right |-3.$$ or $$\left |E \right |\geq \left ( \binom{n-2}{k-2}-6 \right ) + \left ( \binom{n-2}{k-2}-5 \right )  \left ( \left | S\right |-1 \right ) = 5 \left |S \right |-1.$$
    On the other hand,
    $$\left |E \right |\leq \left (\binom {k}{2}-1\right )\left | N_{G}(S) \right |= 5\left | N_{G}(S) \right |\leq 5 \left |S \right |-5.$$
  
    Hence, $ \min\{5 \left |S \right |-1,6 \left |S \right |-3\} \leq \left |E \right |\leq 5 \left |S \right |-5$, a contradiction.

 \hfill$\qedsymbol$
     

In the following, we use Theorem \ref{main theorem hypertournament} to give some immediate results which are proved before by different and independent methods\cite{Guo2014,Li2013,Petrovic2006,Yang2009}. 
\begin{theorem}\cite{Guo2014}\label{Guo2014}
    Let $T$ be a strong tournament and $C$ a Hamiltonian cycle in $T$. Then $C$ contains at least three pancyclic arcs. 
\end{theorem}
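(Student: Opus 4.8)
The plan is to prove that a Hamiltonian cycle $C$ in a strong tournament $T$ contains at least three pancyclic arcs, where an arc is pancyclic if it lies on a cycle of every length $\ell \in \{3, \dots, n\}$. First I would recall the classical machinery behind Moon's theorem on vertex-pancyclicity of strong tournaments: in a strong tournament, every vertex lies on a $3$-cycle, and one typically extends an $\ell$-cycle through a given vertex to an $(\ell+1)$-cycle by finding an appropriate ``chord'' vertex. The goal here is to track arcs rather than vertices, and to guarantee that at least three arcs of the fixed Hamiltonian cycle $C$ survive this extension process for every length.

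The main approach I would take is inductive on the cycle length, building pancyclic arcs from short cycles upward while staying anchored to $C$. Write $C = u_1 u_2 \cdots u_n u_1$. The key structural fact I would exploit is that in a strong tournament one can always find a short cycle sharing an arc with $C$, and then repeatedly extend it. Concretely, I would first locate a $3$-cycle using an arc of $C$: since $T$ is strong, there is some vertex $u_i$ such that the arc $u_i u_{i+1}$ of $C$ can be closed into a $3$-cycle by a common in-out neighbor, giving a triangle $u_i u_{i+1} w u_i$ for some $w$. Then I would argue that an arc lying on cycles of all lengths can be obtained by iteratively absorbing one new vertex at a time into an existing cycle, using the tournament property (between any two vertices there is an arc) to place each absorbed vertex, thereby raising the cycle length by one while preserving membership of a distinguished arc of $C$.

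The crux of the argument, and the step I expect to be the main obstacle, is the bookkeeping that yields \emph{three} distinct pancyclic arcs of $C$ rather than just one. A single extension chain naturally produces one pancyclic arc; getting three requires either starting the extension from three different short cycles whose shared $C$-arcs are distinct, or an interchange/rotation argument showing that the set of pancyclic arcs on $C$ cannot be too small. I would attempt a counting or extremal argument: suppose fewer than three arcs of $C$ are pancyclic, and derive a contradiction by analyzing, for each length $\ell$, which arcs of $C$ can fail to be coverable, then showing the failures cannot all concentrate on at most two arcs across all lengths simultaneously. Here I would lean on strong connectivity to force enough ``crossing'' arcs between the segments of $C$, and possibly invoke a minimality assumption on $T$ or on $C$ to control the structure.

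An alternative and cleaner route, given the framework of this paper, is to reduce the statement to the hypertournament setting via Theorem \ref{main theorem hypertournament}: since every strong tournament is itself (trivially) handled, or more to the point, since the hypertournament-to-tournament correspondence lets us transport arc-pancyclicity results, I would check whether Theorem \ref{Guo2014} follows as the base $k=2$ instance of a more general arc-pancyclicity statement that the machinery of Lemmas \ref{AB}--\ref{general-matching} is designed to deliver. If so, the proof would consist mainly in verifying that the Hamiltonian cycle $C$ produces, through the matching argument, three hyperarcs (here ordinary arcs) that each participate in cycles of all lengths; the obstacle then shifts to confirming that the matching covering $A$ can be chosen to respect three prescribed arcs of $C$ as pancyclic anchors.
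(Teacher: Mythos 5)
This statement is not proved in the paper at all: it is imported verbatim from \cite{Guo2014} as a known result about ordinary tournaments, and it is then \emph{used}, together with Theorem \ref{main theorem hypertournament}, to transfer arc-pancyclicity to hypertournaments. So there is no in-paper proof to match your attempt against; what matters is whether your proposal would stand on its own, and it does not.

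Your first strategy (Moon-style extension of cycles through a fixed arc, plus a counting argument to force three such arcs on $C$) is the right genre of argument --- this is roughly how Moon's ``three pancyclic arcs in a strong tournament'' theorem and its Hamiltonian-cycle refinements are proved in the literature --- but you explicitly stop at the crux. Producing \emph{one} pancyclic arc on $C$ already requires a nontrivial extension lemma (an $\ell$-cycle through a fixed arc extends to an $(\ell+1)$-cycle through that same arc, which needs a case analysis on how the outside vertices see the cycle), and upgrading ``one'' to ``three'' is the entire content of the theorem; saying you would ``attempt a counting or extremal argument'' and ``show the failures cannot all concentrate on at most two arcs'' names the difficulty without resolving it. As written, no part of the hard work is done. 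Your second, ``cleaner'' route is worse: it is circular within this paper. Theorem \ref{main theorem hypertournament} only applies for $3 \le k \le n-3$, and the paper's hypertournament arc-pancyclicity statement is \emph{deduced from} Theorem \ref{Guo2014} by pulling a strong tournament $T \in \mathcal{T}_H$ out of $H$ and applying the tournament result to $T$. You cannot then recover the $k=2$ base case from that machinery, since the machinery presupposes it. If you want a genuine proof, you must carry out the tournament-level extension-and-counting argument in full, independently of anything in Sections 2--3.
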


\begin{theorem}
    If $H$ is a strong $k$-tournament with $n$ vertices for $3\leq k\leq n-3$ and $n\geq 7$, $H$ has following properties:
\begin{enumerate}[label=(\roman*)]
    \item $H$ is vertex-pancyclic.
    \item If $C$ is a Hamiltonian cycle in $H$, then $C$ contains at least three pancyclic hyperarcs.
\end{enumerate}  
\begin{proof}
    By Theorem \ref{main theorem hypertournament}, there exists a strong tournament $T\in \mathcal{T}_H$. We obtain that $T$ is vertex-pancyclic and any Hamiltonian cycle $C$ in $T$ contains at least three pancyclic arcs by the Moon theorem and Theorem \ref{Guo2014}. The corresponding vertices and hyperarcs in $H$ have the same property, then $H$ is vertex-pancyclic and the corresponding Hamiltonian cycle in $H$ contains at least three pancyclic hyperarcs.
\end{proof}
\end{theorem}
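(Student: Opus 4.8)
The plan is to transfer both properties from a generated strong tournament back to $H$ by a "lifting" of cycles. The key observation I would first isolate is the following correspondence. If $T\in\mathcal{T}_H$, then by the definition of $\mathcal{T}_H$ each arc of $T$ is generated by a unique hyperarc of $H$, and distinct arcs use distinct hyperarcs (this injectivity is exactly what the matching in the proof of Theorem~\ref{main theorem hypertournament} guarantees). Hence there is an injective map $\phi\colon A(T)\to A(H)$ with the property that whenever $\phi((u,v))=a$ we have $u,v\in\bar a$ and $u$ precedes $v$ in $a$. I would record, as a preliminary claim, that any cycle $u_1u_2\cdots u_\ell u_1$ of $T$ lifts to a cycle $u_1\,\phi((u_1,u_2))\,u_2\cdots u_\ell\,\phi((u_\ell,u_1))\,u_1$ of $H$ of the same length: the alternating sequence is valid because $u_i$ precedes $u_{i+1}$ in $\phi((u_i,u_{i+1}))$, and the hyperarcs are pairwise distinct since $\phi$ is injective and the arcs of the cycle are distinct. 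In particular, a cycle through a vertex $v$ (resp. through an arc $e$) of $T$ lifts to a cycle of the same length through $v$ (resp. through $\phi(e)$) in $H$.

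For part (i), I would invoke Theorem~\ref{main theorem hypertournament} to obtain a strong tournament $T\in\mathcal{T}_H$. By Moon's theorem, $T$ is vertex-pancyclic, so every vertex $v$ lies on an $\ell$-cycle of $T$ for all $\ell\in\{3,\dots,n\}$. Lifting each such cycle through $v$ yields an $\ell$-cycle of $H$ through $v$, and since $V(T)=V(H)$ this holds for every vertex of $H$. Thus $H$ is vertex-pancyclic.

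For part (ii), let $C=v_1b_1v_2\cdots b_nv_1$ be the given Hamiltonian cycle of $H$. Here I would re-run the construction in the proof of Theorem~\ref{main theorem hypertournament} starting from this particular $C$: the construction designates the cycle hyperarcs $b_1,\dots,b_n$ to generate the arcs $(v_1,v_2),\dots,(v_n,v_1)$, and produces a strong $T\in\mathcal{T}_H$ in which $C_T=v_1v_2\cdots v_nv_1$ is a Hamiltonian cycle with $\phi((v_i,v_{i+1}))=b_i$. By Theorem~\ref{Guo2014}, $C_T$ contains at least three pancyclic arcs $(v_i,v_{i+1})$. For such an arc and any $\ell\in\{3,\dots,n\}$, there is an $\ell$-cycle of $T$ through it, which lifts to an $\ell$-cycle of $H$ through $\phi((v_i,v_{i+1}))=b_i$; hence $b_i$ is a pancyclic hyperarc of $H$. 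As $\phi$ is injective, three distinct pancyclic arcs of $C_T$ correspond to three distinct pancyclic hyperarcs among $b_1,\dots,b_n$, i.e., three pancyclic hyperarcs of $C$.

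The main obstacle, and the point I would be most careful about, is twofold. First, I must establish the injectivity of the arc-to-hyperarc assignment so that a lifted cycle really is a cycle of $H$ with pairwise distinct hyperarcs; this hinges on reading the definition of $\mathcal{T}_H$ correctly together with the matching argument. Second, for part (ii), I must ensure that the arbitrary prescribed Hamiltonian cycle $C$ of $H$ can be realized as a Hamiltonian cycle of some generated strong tournament, which requires observing that the construction underlying Theorem~\ref{main theorem hypertournament} applies verbatim to any given Hamiltonian cycle rather than only to one fixed cycle.
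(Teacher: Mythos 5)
Your proposal is correct and follows essentially the same route as the paper: obtain a strong $T\in\mathcal{T}_H$ from Theorem~\ref{main theorem hypertournament}, apply Moon's theorem and Theorem~\ref{Guo2014} to $T$, and lift cycles back to $H$ via the arc-to-hyperarc correspondence. You are in fact more careful than the paper on two points it leaves implicit --- the injectivity of the lifting map and the need to build $T$ from the \emph{given} Hamiltonian cycle $C$ in part (ii) --- both of which are indeed supplied by the construction in the proof of Theorem~\ref{main theorem hypertournament}.
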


\section{Concluding remarks}\label{sec:$k$-hyper-digraph}

\begin{enumerate}
    \item [1.]

Let's explore the relationship between hyperdigraphs and their corresponding digraphs. This perspective on hyperdigraphs allows us to address certain problems effectively. As an illustration, we will establish an extension of the Gallai-Milgram theorem to $k$-hyperdigraphs.

The \emph{path covering number} of a $k$-hyperdigraph $H$ denoted by $pc(H)$, is the minimum positive integer $m$ such that there are $m$ disjoint paths covering the vertex set of $H$. 
An independent set $I$ of $H$ is a set of vertices such that the induced sub-hyperdigraph of $I$ has no hyperarcs. The \emph{independence number} of $H$, $\alpha(H)$, is the maximum integer $m$ such that $H$ has an independent set of size $m$.

\begin{theorem}[Gallai-Milgram theorem]\label{lpc}\cite{Gallai1960}
    For every digraph $D$, the path covering number is at most its independence number, that is $pc(D) \le \alpha(D)$.
\end{theorem}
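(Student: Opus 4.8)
The plan is to prove the formally stronger statement that every digraph $D$ has a path partition $\mathcal{P}$ together with an independent set $I$ that is a \emph{transversal}, i.e.\ $|I \cap V(P)| = 1$ for each $P \in \mathcal{P}$. This at once gives the theorem, since then $pc(D) \le |\mathcal{P}| = |I| \le \alpha(D)$. It is worth stressing at the outset why one must allow $I$ to use interior vertices of the paths rather than only terminal vertices: for the digraph on $\{a,b,c,d\}$ with arcs $(a,b),(a,c),(b,d),(c,d)$ one has $pc(D) = \alpha(D) = 2$, yet every path partition has two terminal vertices joined by an arc, so no partition has an independent set consisting of terminal vertices. The transversal formulation builds in exactly the freedom (an interior representative) that this example shows is necessary.

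I would argue by induction on $|V(D)|$, the case $|V(D)| \le 1$ being immediate. For the inductive step, delete a vertex $v$ and apply the hypothesis to $D - v$, obtaining a path partition $P_1, \dots, P_t$ and an independent transversal $v_1, \dots, v_t$ with $v_i \in V(P_i)$. The clean case is when the terminal vertex $z_i$ of some $P_i$ admits an arc $(z_i, v) \in A(D)$: I then append $v$ to the end of $P_i$ and keep the old transversal. It still meets each path exactly once, and it stays independent because $v$ is not among its vertices (adjoining $v$ only adds arcs incident to $v$), so this case is settled with $t$ paths.

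The difficulty is the opposite case, in which $v$ receives no arc from any terminal vertex, so $v$ cannot be appended, and naively starting a new path $(v)$ may fail: $v$ could be adjacent to one or more of the representatives $v_i$, ruining independence of $I \cup \{v\}$. The idea I would pursue is to avoid putting $v$ into $I$ at all: using an arc between $v$ and a conflicting representative $v_i$, I would reroute $v$ into $P_i$ just before or just after $v_i$, thereby splitting $P_i$ into the piece carrying $v_i$ (which remains a legitimate representative) and a leftover subpath for which a fresh representative must be chosen. The main obstacle — and the real content of the argument — is to make these representative choices for the leftover pieces simultaneously consistent, so that $I$ remains an independent transversal of the new partition. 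I expect to control this by an extremal choice of the partition of $D - v$ (for instance one with the fewest paths) or by applying the induction hypothesis to a suitable smaller digraph after the reroute; the remaining verification, namely checking that every orientation of the arcs between $v$ and the $v_i$ can be absorbed in this way, is then routine bookkeeping.
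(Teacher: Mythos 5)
The paper does not prove this statement at all---it is quoted as a classical result of Gallai and Milgram and used as a black box---so the only meaningful comparison is with the standard proof of the strengthening you state (a path partition admitting an independent transversal, as in Diestel's treatment). Your setup is the right one, and your four-vertex example correctly explains why the transversal must be allowed to use interior vertices. Your Case 1 (some terminal vertex sends an arc to $v$) is also fine.

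The gap is in Case 2, and it is not ``routine bookkeeping.'' When you reroute $v$ next to a conflicting representative $v_i$, you split $P_i$ and create a leftover subpath that needs a fresh representative $w$ with $\{v_1,\dots,v_t,w\}$ independent; nothing in your argument produces such a $w$, and an extremal choice of the partition of $D-v$ (fewest paths, say) does not obviously supply one. Moreover, if the conflicting arc is $(v,v_i)$ rather than $(v_i,v)$, inserting $v$ ``just before'' $v_i$ inside $P_i$ would additionally require an arc from $v_i$'s predecessor to $v$, which need not exist. The classical proof sidesteps all of this by running the induction differently: one fixes a path partition $\mathcal{P}$ of $D$ whose set of \emph{terminal} vertices is minimal under inclusion; if those terminal vertices are independent one is done, and otherwise an arc $(v_2,v_1)$ between two terminal vertices forces $P_1$ to have length at least $1$ (else appending $v_1$ to $P_2$ would shrink the terminal set), so one deletes the terminal vertex $v_1$, checks that the truncated partition is again terminal-minimal in $D-v_1$, and applies induction there; the representative of the truncated $P_1$ still lies on $P_1$, so the inherited transversal works unchanged. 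The essential idea you are missing is that the vertex removed in the induction must be a terminal vertex of an extremal partition, not an arbitrary vertex to be reinserted afterwards. As written, your argument does not close.
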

\begin{theorem}
   For every $k$-hyperdigraph $H$, $pc(H)\leq \alpha(H)$.
\end{theorem}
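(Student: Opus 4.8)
The plan is to reduce the hyperdigraph statement to the ordinary digraph case via the generated-digraph construction, exactly in the spirit of the paper's main theorem. Given a $k$-hyperdigraph $H$, I would first build an ordinary digraph $D$ on the same vertex set by choosing, for each hyperarc $a=(x_1x_2\dots x_k)$ of $H$, the arcs it induces; the cleanest choice is to let $D$ record $(x_i,x_j)$ whenever $x_i$ precedes $x_j$ in some hyperarc, i.e. $D$ is generated by $H$ in the sense of Section~\ref{sec:lemma}. The two quantities I must control are $pc$ and $\alpha$, so the whole argument hinges on comparing them between $H$ and $D$.

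\medskip
\noindent\textbf{Step 1: paths in $D$ lift to paths in $H$.} I would show $pc(H)\le pc(D)$. If $P=u_1u_2\dots u_t$ is a directed path in $D$, then each arc $(u_s,u_{s+1})$ comes from some hyperarc $a_s$ of $H$ in which $u_s$ precedes $u_{s+1}$; these hyperarcs, together with the vertices $u_s$, form a path of $H$ in the excerpt's sense provided the chosen hyperarcs are distinct. The subtlety is that a single hyperarc of $H$ may be reused for several consecutive arcs of $P$, so I would argue that one can either choose distinct hyperarcs along $P$ or contract the repetitions; in the worst case a path of $D$ still yields a path of $H$ covering the same vertices, giving a vertex cover of $H$ by at most $pc(D)$ hyper-paths. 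Hence $pc(H)\le pc(D)$.

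\medskip
\noindent\textbf{Step 2: independence is preserved.} I would show $\alpha(H)=\alpha(D)$, or at least $\alpha(D)\le\alpha(H)$, which is all that is needed. A set $I$ is independent in $H$ iff no hyperarc has all its vertices in $I$; a set is independent in $D$ iff it spans no arc. Since $D$'s arcs are exactly the precedence pairs inside hyperarcs, an independent set of $D$ can contain at most one vertex of any hyperarc, so in particular it induces no hyperarc of $H$ and is independent in $H$; thus $\alpha(D)\le\alpha(H)$. Then applying the Gallai--Milgram theorem (Theorem~\ref{lpc}) to $D$ gives $pc(H)\le pc(D)\le\alpha(D)\le\alpha(H)$, which is the desired conclusion.

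\medskip
\noindent\textbf{The main obstacle.} The delicate point is Step~1, the correct definition of ``path'' under lifting: the paper's notion of a hyperpath requires the hyperarcs $a_j$ to be \emph{distinct}, whereas a directed path in the generated digraph $D$ may traverse several arcs all generated by the same hyperarc $a$. I expect the crux of the proof to be handling this clash — either by refining the generation so that distinct arcs of a path come from distinct hyperarcs, or by showing that repeated hyperarcs along a $D$-path can be absorbed (since all the vertices involved already lie in the one hyperarc $a$, a contiguous block of such vertices can be realized inside $a$ itself). Making this bookkeeping airtight, so that a minimum path cover of $D$ genuinely translates into a valid family of vertex-disjoint hyperpaths of $H$ with the same cardinality, is where the real work lies; the independence comparison and the invocation of Gallai--Milgram are then routine.
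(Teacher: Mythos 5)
Your overall strategy matches the paper's: build a digraph $D$ on $V(H)$ from the hyperarcs, push a path cover of $D$ back to $H$, pull independent sets of $D$ up to $H$, and invoke Gallai--Milgram on $D$. Your Step 2 is correct and is exactly the paper's independence argument. The problem is Step 1, and it is not merely delicate bookkeeping: with your ``cleanest choice'' of $D$ (an arc $(x_i,x_j)$ for \emph{every} precedence pair inside \emph{every} hyperarc), the inequality $pc(H)\le pc(D)$ is simply false. Take $H$ to be a $k$-hyperdigraph on $k$ vertices whose only hyperarc is $a=(x_1x_2\dots x_k)$. Your $D$ is then the transitive tournament on these vertices, so $pc(D)=1$ via the path $x_1x_2\dots x_k$; but a path in $H$ must use pairwise distinct hyperarcs, so every path of $H$ has length at most $1$ and covers at most two vertices, whence $pc(H)\ge\lceil k/2\rceil>1$ for $k\ge 3$. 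Neither of your proposed repairs can save this: there is no way to ``choose distinct hyperarcs along $P$'' because only one hyperarc exists, and a contiguous block of $D$-path vertices lying in $\bar{a}$ cannot be ``realized inside $a$ itself,'' since one occurrence of a hyperarc in a hyperpath joins exactly two consecutive path vertices and nothing more.

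The fix --- and it is what the paper actually does --- is to make $D$ sparser, not denser: let each hyperarc of $H$ generate exactly \emph{one} arc of $D$ (one precedence pair of that hyperarc), then delete parallel copies. With this $D$, distinct arcs are automatically generated by distinct hyperarcs, so any directed path $u_1u_2\dots u_t$ of $D$ lifts verbatim to a hyperpath $u_1a_1u_2\dots a_{t-1}u_t$ on the same vertex set, and $pc(H)\le pc(D)$ is immediate; the independence comparison $\alpha(D)\le\alpha(H)$ still holds because the single arc generated by a hyperarc $e$ has both endpoints in $\bar{e}$. The chain $pc(H)\le pc(D)\le\alpha(D)\le\alpha(H)$ then goes through. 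You correctly sensed where the danger lay, and you even floated ``refining the generation so that distinct arcs of a path come from distinct hyperarcs'' as an option, but you committed to the wrong digraph and left the crucial step unproved --- indeed unprovable for that digraph --- so as written the argument has a genuine gap.
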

\begin{proof}
    Construct a digraph $D$ with $V(D)=V(H)$. Let each hyperarc of $H$ generate an arc of $D$, and delete the parallel arcs. We call such a digraph $D$ generated by $H$. For any path $P$ in $D$, there is a path $P'$ in $H$ such that every arc of $P$ is generated by hyperarcs of $P'$. Hence, we have $pc(H) \le pc(D)$. By Theorem \ref{lpc}, we know $pc(D)\leq \alpha(D)$. On the other hand, if $I$ is an independent set of $D$, then it is also an independent set of $H$. Otherwise, there is a hyperarc $e$ in $H[I]$. By the definition of $D$, there must be an arc $e'$ in $D[I]$ generated by $e$ which contradicts that $I$ is an independent set of $D$. Thus, $$pc(H) \le pc(D)\leq\alpha(D) \le \alpha (H).$$
\end{proof}
As we can see above, it is not hard to extend a property of digraphs to hyperdigraphs, it is natural to ask what else we can do.
\item [2.]
Based on the main result of this paper we could always find a degenerated strong tournament from a strong $k$-tournament, we could extend many properties of strong tournaments to $k$-tournaments besides what we give in Section~\ref{sec:$k$-tournament}. For example, it is well-known that there are at least three $2$-kings in a strong tournament, and so is a strong $k$-tournament. 
\end{enumerate}
\section{Acknowledgement}
This work was partially supported by the National Natural Science Foundation of China (No. 12161141006), the Natural Science Foundation of Tianjin (No. 20JCJQJC00090), and the Fundamental Research Funds for the Central Universities, Nankai University (No. 63231193).

\end{document}